\documentclass[12pt]{amsart}
\usepackage{amssymb, amscd, amsmath, amsthm, latexsym, enumerate}

\renewcommand{\geq}{\geqslant}
\renewcommand{\leq}{\leqslant}

\newtheorem{theorem}{Theorem}
\newtheorem{lemma}[theorem]{Lemma}
\newtheorem{cor}[theorem]{Corollary}

\newtheorem*{thm}{Theorem}

\newtheorem*{cor*}{Corollary}

\begin{document}
\title[4-manifolds with 3-manifold fundamental groups]
{Homotopy types of 4-manifolds with 3-manifold fundamental groups}

\author{Jonathan A. Hillman }
\address{School of Mathematics and Statistics\\
     University of Sydney, NSW 2006\\
      Australia }

\email{jonathanhillman47@gmail.com}

\begin{abstract}
We show that the homotopy type of a 4-manifold $M$ whose fundamental group 
is a finitely presentable $PD_3$-group $\pi$
and with $w_1(M)=w_1(\pi)$ is determined by $\pi$,  $\Pi=\pi_2(M)$,  
$k_1(M)$ and the equivariant intersection pairing $\lambda_M$.
\end{abstract}

\keywords{4-manifold, homotopy type, $PD_3$-group, $PD_4$-complex}


\maketitle
The basic algebraic invariants of a closed 4-manifold $M$ are the fundamental group 
$\pi=\pi_1(M)$,  the $\mathbb{Z}[\pi]$-module $\Pi=\pi_2(M)$,
the equivariant homotopy intersection pairing $\lambda_M$ on $\Pi$,
the first $k$-invariant $\kappa=k_1(M)\in{H^3(\pi;\Pi)}$,
the Euler characteristic $\chi(M)$,  
and the Stiefel-Whitney classes $w=w_1(M)$ and $w_2(M)$.
(Strictly speaking, $\lambda_X$ determines $\Pi$,
and it also determines $w$ if $\pi\not=0$.)
These invariants determine the stable homeomorphism type of $M$
(with respect to sums with $S^2\times{S^2}$), 
if $\pi$ is the group of an aspherical closed orientable 3-manifold \cite{KLPT}.
(The $k$-invariant is determined by the other data in this situation.)

We shall show that the homotopy type of a $PD_4$-complex $X$ 
whose fundamental group is a finitely presentable $PD_3$-group $\pi$,
and with  $w_1(X)=w_1(\pi)$ is determined by the  invariants
$\pi$, $\Pi$, $\kappa$ and $\lambda_X$.
(However we do not yet know the possible values of $\Pi$,
$\kappa$ or $\lambda_X$.)

I would like to thank D.Kasprowski for pointing out a slip in the first version of this note,
in my invocation of surgery in Corollary 9 of that version.
(This corollary has since been deleted.)
Since first submitting this note to the arXiv, I have learned that Kasprowski,
Powell and Ray have independently found similar arguments for the main result \cite{KPR}.

\section{Notation and terminology}

Let $\pi$ be a finitely presentable $PD_3$-group,
with orientation character $w=w_1(\pi)$,
and let $Y=K(\pi,1)$. 
We may assume that $Y=Y_o\cup{D^3}$, 
where $(Y_o,S^2)$  is a $PD_3$-pair
and $Y_o$ is cohomologically 2-dimensional.

Let $X$ be a $PD_4$-complex such that 
$\pi=\pi_1(X)\cong\pi$ and $w_1(X)=w_1(\pi)$,
and let $p:\widetilde{X}\to{X}$ be the universal covering.
The homology of $\widetilde{X}$ is given by 
$H_i(\widetilde{X};\mathbb{Z})=H_i(X;\mathbb{Z}[\pi])$, for all $i$.
We assume that $\pi$ acts on the left of $\widetilde{X}$,
and so these are left $\mathbb{Z}[\pi]$-modules.
Since $\pi$ has one end, $H_i(X;\mathbb{Z}[\pi])=0$ if $i\not=0,2$.
Since $c.d.\pi=3$, $X$ is not aspherical, 
and so $\Pi={H_2(X;\mathbb{Z}[\pi])}\not=0$.

The homologies of $\widetilde{X}$ and $X$ 
are related by the Cartan-Leray spectral sequence for the covering, 
which has the form
\[
E_{p,q}^2=H_p(\pi;H_q(X;\mathbb{Z}[\pi]))\Rightarrow{H_{p+q}(X;\mathbb{Z})}.
\]
(Note that the groups $H_*(\pi;A)=Tor_*^{\mathbb{Z}[\pi]}(\mathbb{Z},A)$
with coefficients in a left module $A$ are defined via a resolution of $\mathbb{Z}$ by 
{\it right\/} $\mathbb{Z}[\pi]$-modules.)
There is also a Universal Coefficient spectral sequence
\[
E_2^{p,q}=Ext^q_{\mathbb{Z}[\pi]}(H_p(X;\mathbb{Z}[\pi]), \mathbb{Z}[\pi])
\Rightarrow{H^{p+q}(X;\mathbb{Z}[\pi])},
\]
which relates the homology and equivariant cohomology of $\widetilde{X}$.
 
Let $\varepsilon:\mathbb{Z}[\pi]\to\mathbb{Z}$ be the augmentation homomorphism,
with kernel the augmentation ideal $I_\pi$,
and let $j_\pi:I_\pi\to\mathbb{Z}[\pi]$ be the natural inclusion.
Applying the functor $Hom_{\mathbb{Z}[\pi]}(\mathbb{Z},-)$ 
to the exact sequence
\begin{equation*}
\begin{CD}
0\to{I_\pi}\to\mathbb{Z}[\pi]@>\varepsilon>>\mathbb{Z}\to0
\end{CD}
\end{equation*}
gives $H^0(\pi;I_\pi)=0$ and
$H^i(\pi;\mathbb{Z})\cong{H^{i+1}(\pi;I_\pi)}~\mathrm{for}~i=0$ or 1,
while applying the functor $Hom_{\mathbb{Z}[\pi]}(-,\mathbb{Z}[\pi])$ gives
$Hom_{\mathbb{Z}[\pi]}(I_\pi,\mathbb{Z}[\pi])\cong\mathbb{Z}[\pi]$ 
(with generator $j_\pi$) and
$Ext^i_{\mathbb{Z}[\pi]}(I_\pi,\mathbb{Z}[\pi])\cong{H^{i+1}(\pi;\mathbb{Z}[\pi])},
~\mathrm{for}~ i>0$ \cite[Lemma 3.2]{Pl86}.
Note also that $Hom_{\mathbb{Z}[\pi]}(I_\pi,I_\pi)\cong\mathbb{Z}[\pi]$,
generated by $id_{I_\pi}$ \cite[Lemma 3.3]{Pl86}.
(The latter result is not stated explicitly in \cite[Lemma 3.3]{Pl86},
which is formulated so as to apply also to the case when $\pi$ is finite.)

If $M$ is a left $\mathbb{Z}[\pi]$-module let $M^w=\mathbb{Z}^w\otimes{M}$ 
be the left $\mathbb{Z}[\pi]$-module with the same underlying abelian group 
and diagonal left $\pi$-action,
given by $g(1\times{x})=w(g)(1\times{gx})$ for all $g\in\pi$ and $x\in\Pi$.
Then $\mathbb{Z}[\pi]^w\cong\mathbb{Z}[\pi]$, 
since we may define an isomorphism  
$f:\mathbb{Z}[\pi]\to\mathbb{Z}[\pi]^w$ by $f(g)=w(g)\otimes{g}$ for all $g\in\pi$.
The linear extension of $w$ defines the {\it $w$-twisted augmentation\/} 
$\varepsilon_w:\mathbb{Z}[\pi]\to\mathbb{Z}^w$,
with kernel $I_\pi^w$.
Arguments similar to those of the previous paragraph apply to $I_\pi^w$.

If $R$ is a right $\mathbb{Z}[\pi]$-module let $\overline{R}$ be the left module
with the same underlying group and $\mathbb{Z}[\pi]$-action determined 
by $g.r=w(g)rg$ for all $r\in{R}$  and $g\in\pi$.
We use a similar strategy and notation to obtain a right module $\overline{L}$ 
from a left $\mathbb{Z}[\pi]$-module $L$.
Free right modules give rise to free left modules of the same rank, and conversely.
We may define the dual of a left module $M$ as the left module
$M^\dagger=\overline{Hom_{\mathbb{Z}[\pi]}(M,\mathbb{Z}[\pi])}$.

Two (left) $\mathbb{Z}[\pi]$-modules $L$ and $L'$ are {\it stably projective equivalent\/} 
if $L\oplus{P}\cong{L'}\oplus{P'}$ for some finitely generated projective 
$\mathbb{Z}[\pi]$modules $P,P'$.
They are {\it stably equivalent\/} if we may assume that $P$ and $P'$ are
each free modules.
We shall let $[L]_{pr}$ and $[L]$ denote the equivalence classes 
corresponding to these two equivalence relations.
As our concern in this paper is mainly with finite $PD_4$-complexes, 
which correspond most closely to manifolds, 
stable equivalence is the more useful notion.
However our arguments apply with little change to the more general setting 
of finitely dominated $PD_4$-complexes
(and even $PD_4$-spaces in the sense of \cite{PDD3}),
for which the broader notion of stably projective equivalence is needed.

If $A$ is an abelian group let $rk(A)=\dim_\mathbb{Q}\mathbb{Q}\otimes{A}$ be its rank.

\section{the basic examples}

Let $\tau$ be a self-homeomorphism of $S^2\times{S^1}$
which does not extend over $S^2\times{D^2}$.
(There is an unique isotopy class of such maps $\tau$.)
Then $X(\pi)=Y_o\times{S^1}\cup{S^2}\times{D^2}$
and  $X(\pi)_\tau=Y_o\times{S^1}\cup_\tau{S^2}\times{D^2}$
are $PD_4$-complexes with fundamental group $\pi$,
orientation character $w=w_1(Y)$ and  Euler characteristic 2.
Since $X(\pi)=\partial(Y_o\times{D^2})$, it retracts onto $Y_o$.

The arguments of \cite[\S2]{Pl86} for the case when $Y$ is a 3-manifold 
are essentially homological and apply equally well in our situation.
Let  $U=Y_o\times{S^1}$.
The long exact sequence of the pair $(X(\pi),U)$  
with coefficients $\mathbb{Z}[\pi]$ gives a five-term exact sequence
\[
H_3(X(\pi),U;\mathbb{Z}[\pi])\to{H_2(U;\mathbb{Z}[\pi])}\to{H_2(X(\pi);\mathbb{Z}[\pi])}\to
\]
\[{H_2(X(\pi),U;\mathbb{Z}[\pi])}\to{H_1(U;\mathbb{Z}[\pi])}\to0,
\]
since $H_i(X;\mathbb{Z}[\pi])=0$ for $i\not=0, 2$.
Now $H_3(X(\pi),U;\mathbb{Z}[\pi])=0$ and 
$H_2(X(\pi),U;\mathbb{Z}[\pi])\cong\mathbb{Z}[\pi]$, 
by excision, 
while\\ $H_2(U;\mathbb{Z}[\pi])\cong{H_2(Y_o;\mathbb{Z}[\pi])}(\cong
\pi_2(Y_o))\cong\mathbb{Z}[\pi]$ and $H_1(U;\mathbb{Z}[\pi])\cong\mathbb{Z}$.
Hence this sequence reduces to
\[
0\to\pi_2(Y_o)\cong\mathbb{Z}[\pi]\to\pi_2(X(\pi))\to\mathbb{Z}[\pi]\to\mathbb{Z}\to0.
\]
Hence $\pi_2(X(\pi))$ is an extension of $I_\pi$ by $\pi_2(Y_o)\cong\mathbb{Z}[\pi]$.
The extension splits,
since  $Ext^1_{\mathbb{Z}[\pi]}(I_\pi,\mathbb{Z}[\pi])=H^2(\pi;\mathbb{Z}[\pi])=0$,
and so  $\pi_2(X(\pi))\cong{\mathbb{Z}[\pi]\oplus{I_\pi}}$.
The retraction of $X(\pi)$ onto $Y_o$ determines a splitting.

A similar argument shows that 
$\pi_2(X(\pi)_\tau)\cong{\mathbb{Z}[\pi]\oplus{I_\pi}}$ also,
but we do not know whether $X(\pi)_\tau$ retracts onto $Y_o$.

If $\pi$ has a balanced presentation then 
there is a finite 2-complex $K$ with $\pi_1(K)\cong\pi$ and $\chi(K)=1$.
Let $N$ be a 4-dimensional handlebody thickening of $K$.
Then the double of $N$ is a closed 4-manifold $M$ with 
$\pi_1(M)\cong\pi$ and $\chi(M)=2$, and which retracts onto $K$.

If $Y$ is a closed 3-manifold then the corresponding closed 4-manifolds
are the manifolds obtained by elementary surgery on the second factor
of $Y\times{S^1}$. 
(There are two possible framings of the normal bundle.)
Plotnick uses manifold topology, 
first to define a splitting of the above exact sequence and then 
to show that elements in the image of $\pi_2(Y_o)$ in $\pi_2(X)$ 
have self-intersection 0, in either case, 
and that $\pi_2(X(\pi))$ is the direct sum of two summands which are
self-annihilating with respect to $\lambda_X$.
The equivariant homotopy intersection pairings of these 4-manifolds 
are not isometric \cite[Theorem 3.1]{Pl86}.

\section{$\Pi$, $\chi$ and $\lambda_X$}

In this section we shall summarize the key properties of $\Pi$ and $\chi$,
which were determined in \cite[Theorem 3.13]{FMGK}, 
and define the equivariant intersection pairing $\lambda_X$,
using the cohomological formulation.

We shall first state without proof a result from \cite{FMGK}.
\begin{thm}
\cite[Theorem 3.13]{FMGK}
Let $X$ be a $PD_4$-complex such that $\pi=\pi_1(X)$ is a finitely presentable
$PD_3$-group and $w_1(X)=w_1(\pi)$.
Then $\chi(X)\geq2$, $[\Pi]_{pr}=[I_\pi]_{pr}$ and $\Pi^\dagger$ is projective.
\end{thm}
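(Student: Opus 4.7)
The plan is to analyse the cellular chain complex $C_*=C_*(\widetilde{X})$ of the universal cover as a complex of finitely generated free $\mathbb{Z}[\pi]$-modules (or projective modules, if $X$ is only finitely dominated). Since $\pi$ has one end and $X$ is a $PD_4$-complex, the homology of $\widetilde{X}$ satisfies $H_0=\mathbb{Z}$, $H_2=\Pi$, and $H_i=0$ for $i=1,3,4$; here $H_3=0$ comes from Poincar\'e duality together with $H^1(X;\mathbb{Z}[\pi])=0$, itself a consequence of the universal coefficient spectral sequence and $H^1(\pi;\mathbb{Z}[\pi])=0$ for a one-ended $PD_3$-group.

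The vanishing of $H_3$ and $H_4$ yields an exact sequence $0\to C_4\to C_3\to B_2\to 0$ with $B_2=\operatorname{im}(d_3)$, while the lower part of $C_*$ gives a length-3 resolution $0\to Z_2\to C_2\to C_1\to C_0\to\mathbb{Z}\to 0$ of $\mathbb{Z}$, where $Z_2=\ker(d_2)$. Since $c.d.\pi=3$, $\mathbb{Z}$ admits a length-3 projective resolution over $\mathbb{Z}[\pi]$, so Schanuel's lemma shows that $Z_2$ is projective. Consequently $0\to C_4\to C_3\to Z_2\to\Pi\to 0$ is a length-2 projective resolution of $\Pi$.

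For $[\Pi]_{pr}=[I_\pi]_{pr}$, I would work with Euler characteristic classes in $K_0(\mathbb{Z}[\pi])$. The two resolutions give $\chi_P(\Pi)+\chi_P(\mathbb{Z})=\sum_i(-1)^i[C_i]=\chi(X)\cdot[\mathbb{Z}[\pi]]$, while $0\to I_\pi\to\mathbb{Z}[\pi]\to\mathbb{Z}\to 0$ yields $\chi_P(I_\pi)+\chi_P(\mathbb{Z})=[\mathbb{Z}[\pi]]$; subtracting, $[\Pi]-[I_\pi]=(\chi(X)-1)\cdot[\mathbb{Z}[\pi]]$ in $K_0$, which vanishes in $\widetilde{K}_0(\mathbb{Z}[\pi])$. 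For the projectivity of $\Pi^\dagger$, pick an isomorphism $\Pi\oplus P\cong I_\pi\oplus Q$ with $P,Q$ projective and apply the additive functor $(-)^\dagger$, which preserves projectivity. This gives $\Pi^\dagger\oplus P^\dagger\cong I_\pi^\dagger\oplus Q^\dagger\cong\mathbb{Z}[\pi]\oplus Q^\dagger$, using $I_\pi^\dagger\cong\mathbb{Z}[\pi]$ recorded in the excerpt. Since the right-hand side is projective, $\Pi^\dagger$ is a direct summand of a projective, hence projective.

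The delicate step is $\chi(X)\ge 2$. Applying the augmentation rank $K_0(\mathbb{Z}[\pi])\to\mathbb{Z}$ to the identity above, and using $\mathrm{rk}(\mathbb{Z})=\chi(\pi)=0$ for a $PD_3$-group, we obtain $\chi(X)=\mathrm{rk}_{\mathbb{Z}[\pi]}(\Pi)$; the task reduces to showing this rank is at least $2$. I expect this to be the main obstacle, since $[\Pi]_{pr}=[I_\pi]_{pr}$ with $\mathrm{rk}_{\mathbb{Z}[\pi]}(I_\pi)=1$ does not by itself pin down $\mathrm{rk}_{\mathbb{Z}[\pi]}(\Pi)$. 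Additional input would be required---for instance the intersection pairing $\lambda_X$ on $\Pi$ together with its adjoint, or the nonvanishing of $\mathrm{Ext}^2(\Pi,\mathbb{Z}[\pi])$ forced by the isomorphism $H^4(X;\mathbb{Z}[\pi])\cong\mathbb{Z}^w$ from Poincar\'e duality---to exhibit enough extra projective content in $\Pi$ beyond that of $I_\pi$ to reach rank $2$, matching the base examples $X(\pi)$ and $X(\pi)_\tau$ for which $\Pi\cong\mathbb{Z}[\pi]\oplus I_\pi$ has rank exactly $2$.
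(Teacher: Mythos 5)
First, a point of reference: the paper does not prove this statement at all --- it is quoted from \cite[Theorem 3.13]{FMGK} and explicitly ``stated without proof'', so your attempt can only be compared with the argument in that reference, whose main ingredients are nevertheless visible elsewhere in the paper (Lemma \ref{evalseq}, and the later remark that $\Pi\oplus\mathbb{Z}[\pi]^r\cong\mathbb{Z}[\pi]^{\chi(X)-1}\oplus I_\pi\oplus\mathbb{Z}[\pi]^r$). Your opening moves are correct and standard: $H_i(X;\mathbb{Z}[\pi])=0$ for $i\neq0,2$, Schanuel's lemma against a length-$3$ resolution of $\mathbb{Z}$ makes $Z_2$ projective, and $0\to C_4\to C_3\to Z_2\to\Pi\to0$ is a length-$2$ projective resolution of $\Pi$. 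The first genuine gap is the passage from Euler classes in $K_0(\mathbb{Z}[\pi])$ to $[\Pi]_{pr}=[I_\pi]_{pr}$: equality of Euler classes is strictly weaker than stable projective equivalence of modules. Any module $T$ with a resolution $0\to\mathbb{Z}[\pi]^n\to\mathbb{Z}[\pi]^n\to T\to0$ has vanishing Euler class, so $I_\pi$ and $I_\pi\oplus T$ have equal classes in $K_0$ without being stably isomorphic; your computation cannot exclude such a discrepancy. Since you then deduce the projectivity of $\Pi^\dagger$ from the stable equivalence, that conclusion falls with it. The correct route runs in the opposite order. The universal coefficient spectral sequence together with Poincar\'e duality gives $Ext^1_{\mathbb{Z}[\pi]}(\Pi,\mathbb{Z}[\pi])=0$ (a subquotient of $H^3(X;\mathbb{Z}[\pi])=0$) and $Ext^2_{\mathbb{Z}[\pi]}(\Pi,\mathbb{Z}[\pi])\cong H^4(X;\mathbb{Z}[\pi])\cong\mathbb{Z}$ --- the ``extra input'' you gesture at in your last paragraph. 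Dualizing your length-$2$ resolution then yields an exact sequence $0\to\Pi^\dagger\to Z_2^\dagger\to C_3^\dagger\to C_4^\dagger\to\mathbb{Z}\to0$, and Schanuel against a length-$3$ projective resolution of $\mathbb{Z}$ shows that $\Pi^\dagger$ is projective. Feeding this into the sequence $0\to\overline{H^2(X;\mathbb{Z}[\pi])}\to\Pi^\dagger\to\mathbb{Z}\to0$ of Lemma \ref{evalseq}, with $\overline{H^2(X;\mathbb{Z}[\pi])}\cong\Pi$ by Poincar\'e duality, and applying Schanuel once more against $0\to I_\pi\to\mathbb{Z}[\pi]\to\mathbb{Z}\to0$, gives $\Pi\oplus\mathbb{Z}[\pi]\cong I_\pi\oplus\Pi^\dagger$, which is the stable projective equivalence.

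The second gap is $\chi(X)\geq2$, which you correctly flag as unresolved; note that no amount of module theory of the above kind can supply it, since $\Pi\cong I_\pi$ (hence $\chi=1$) is consistent with everything proved so far. The standard argument is cohomological over $\mathbb{F}_2$: Poincar\'e duality gives $\chi(X)=2-2\beta_1(X;\mathbb{F}_2)+\beta_2(X;\mathbb{F}_2)$; the classifying map $c_X$ induces a monomorphism $H^2(\pi;\mathbb{F}_2)\to H^2(X;\mathbb{F}_2)$ whose image is self-annihilating under cup product, because products of pulled-back classes factor through $H^4(\pi;\mathbb{F}_2)=0$; since the cup-product pairing on $H^2(X;\mathbb{F}_2)$ is nonsingular, a totally isotropic subspace has dimension at most $\beta_2(X;\mathbb{F}_2)/2$, so $\beta_2(X;\mathbb{F}_2)\geq2\beta_2(\pi;\mathbb{F}_2)=2\beta_1(\pi;\mathbb{F}_2)=2\beta_1(X;\mathbb{F}_2)$, whence $\chi(X)\geq2$. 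So your proposal is sound in its chain-level preliminaries but needs both the duality computation of the $Ext$ groups (to repair the stable equivalence and the projectivity of $\Pi^\dagger$) and the isotropic-subspace argument (to obtain the Euler characteristic bound).
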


The Euler characteristic is in fact determined by $\pi$ and $\Pi$.
This follows easily from the invariance of $\chi$ between the pages of a spectral sequence, 
and the special nature of $\Pi$.

\begin{cor}
 $\chi(X)=rk(\mathbb{Z}\otimes_{\mathbb{Z}[\pi]}\Pi)+1-\beta_1(\pi)$.
\end{cor}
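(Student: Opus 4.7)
The plan is to extract $\chi(X)$ from the Cartan-Leray spectral sequence recorded above,
\[
E^2_{p,q}=H_p(\pi;H_q(X;\mathbb{Z}[\pi]))\Rightarrow H_{p+q}(X;\mathbb{Z}),
\]
and then to replace $\Pi$ by $I_\pi$ in the $q=2$ row (where feasible) so that the right hand side can be rewritten in terms of the Betti numbers of $\pi$. Since $H_q(X;\mathbb{Z}[\pi])=0$ unless $q=0,2$, and since the alternating sum of ranks is preserved from $E^2$ to $E^\infty$, we obtain
\[
\chi(X)=\chi(\pi)+\sum_{p\geq0}(-1)^p rk(H_p(\pi;\Pi)).
\]
As $\pi$ is a $PD_3$-group, Poincar\'e duality (passing to the orientable subgroup of index at most $2$ if necessary) gives $\chi(\pi)=0$, so the task reduces to computing the ranks $rk(H_p(\pi;\Pi))$.

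For $p\geq1$, any finitely generated projective $\mathbb{Z}[\pi]$-module $P$ satisfies $H_p(\pi;P)=0$, because $P$ is a summand of a free module and $H_p(\pi;\mathbb{Z}[\pi])=0$ in positive degrees. Theorem 1 tells us $[\Pi]_{pr}=[I_\pi]_{pr}$, so by adding projective summands and using the vanishing just noted, $H_p(\pi;\Pi)\cong H_p(\pi;I_\pi)$ for all $p\geq1$. The long exact sequence in $\pi$-homology associated with $0\to I_\pi\to\mathbb{Z}[\pi]\to\mathbb{Z}\to0$, again using $H_{*\geq1}(\pi;\mathbb{Z}[\pi])=0$, then identifies $H_p(\pi;I_\pi)\cong H_{p+1}(\pi;\mathbb{Z})$ for $p\geq1$. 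Hence $rk(H_1(\pi;\Pi))=\beta_2(\pi)$, $rk(H_2(\pi;\Pi))=\beta_3(\pi)$, and $rk(H_p(\pi;\Pi))=0$ for $p\geq3$ since $c.d.\pi=3$.

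Substituting these values gives
\[
\chi(X)=rk(\mathbb{Z}\otimes_{\mathbb{Z}[\pi]}\Pi)-\beta_2(\pi)+\beta_3(\pi),
\]
and rewriting $-\beta_2(\pi)+\beta_3(\pi)=1-\beta_1(\pi)$ via $\chi(\pi)=0$ yields the stated formula. The only step requiring any care is that the replacement of $\Pi$ by $I_\pi$ cannot be performed in degree $0$, since projective modules do contribute to the coinvariants; this is precisely why the formula keeps $rk(\mathbb{Z}\otimes_{\mathbb{Z}[\pi]}\Pi)$ as an independent term rather than expressing it through $\pi$ alone.
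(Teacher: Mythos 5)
Your proof is correct and follows essentially the same route as the paper: both compute $\chi(X)$ as the alternating sum of ranks on the $E^2$ page of the Cartan--Leray spectral sequence, use $[\Pi]_{pr}=[I_\pi]_{pr}$ to identify $H_p(\pi;\Pi)\cong H_{p+1}(\pi;\mathbb{Z})$ for $p>0$, and invoke $\chi(\pi)=0$ to rewrite $\beta_3(\pi)-\beta_2(\pi)$ as $1-\beta_1(\pi)$. You merely spell out the intermediate steps (vanishing of $H_{p\geq1}(\pi;P)$ for projective $P$ and the long exact sequence for $0\to I_\pi\to\mathbb{Z}[\pi]\to\mathbb{Z}\to0$) that the paper leaves implicit.
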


\begin{proof}
The only nonzero entries in the Cartan-Leray homology spectral sequence for the universal cover of $X$ are when $0\leq{p}\leq3$ and $q=0$ or 2,
and then $E_{p,0}^2=H_p(\pi;\mathbb{Z})$,
while $E_{0,2}^2=\mathbb{Z}\otimes_{\mathbb{Z}[\pi]}\Pi$ and
$E_{p,2}^2=H_{p+1}(\pi;\mathbb{Z})$ for $p>0$, since $[\Pi]_{pr}=[I_\pi]_{pr}$.
Since $\chi(X)=\Sigma_{p,q}(-1)^{p+q}rk(E_{p,q}^2)$, the corollary follows.
\end{proof}

If $X$ is a closed 4-manifold, $\pi=\pi_1(X)$ and $w=w_1(X)$ 
then geometric intersection numbers can be used to define a
$w$-hermitean equivariant intersection pairing on $\Pi$, 
with values in $\mathbb{Z}[\pi]$.
In the Poincar\'e duality complex case we cannot count geometric intersection numbers
and so we shall use the cohomological formulation of the intersection pairing 
\cite[Proposition 4.58]{Ra} instead.
This formulation is well suited to the application of
\cite[Theorem 2]{Hi20} in Theorem \ref{mainthm} below.

\begin{lemma}
\label{evalseq}
There is an exact sequence of left $\mathbb{Z}[\pi]$-modules
\begin{equation*}
\begin{CD}
0\to\overline{H^2(X;\mathbb{Z}[\pi])}@>{ev}>>\Pi^\dagger\to\mathbb{Z}\to0.
\end{CD}
\end{equation*}
 \end{lemma}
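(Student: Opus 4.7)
The plan is to extract this sequence from the Universal Coefficient spectral sequence
\[
E_2^{p,q}=Ext^q_{\mathbb{Z}[\pi]}(H_p(X;\mathbb{Z}[\pi]),\mathbb{Z}[\pi])\Rightarrow H^{p+q}(X;\mathbb{Z}[\pi])
\]
recorded in the introduction. First I would pin down the nonzero $E_2$-entries in low total degree. The vanishing $H_p(X;\mathbb{Z}[\pi])=0$ for $p\neq0,2$ kills every column except $p=0$ and $p=2$. On the column $p=0$, the fact that $\pi$ is a $PD_3$-group leaves only $E_2^{0,3}=H^3(\pi;\mathbb{Z}[\pi])=\mathbb{Z}^w$. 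On the column $p=2$, combining the stable equivalence $[\Pi]_{pr}=[I_\pi]_{pr}$ from the cited theorem with the long exact $Ext$ sequence coming from $0\to I_\pi\to\mathbb{Z}[\pi]\to\mathbb{Z}\to 0$ gives $Ext^1_{\mathbb{Z}[\pi]}(\Pi,\mathbb{Z}[\pi])=0$ and $Ext^2_{\mathbb{Z}[\pi]}(\Pi,\mathbb{Z}[\pi])=\mathbb{Z}^w$. Hence the nonzero $E_2$-entries of total degree at most $3$ are just $E_2^{2,0}=Hom_{\mathbb{Z}[\pi]}(\Pi,\mathbb{Z}[\pi])$ and $E_2^{0,3}=\mathbb{Z}^w$.

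Next I would track the differentials. A short check shows that the only differential in or out of either of these slots that can be nonzero is the length-three differential $d_3:E_3^{2,0}\to E_3^{0,3}$; every other incoming or outgoing differential has source or target in a vanishing $E_2$-term. The only contribution to the abutment $H^3(X;\mathbb{Z}[\pi])$ is $E_\infty^{0,3}$, and Poincar\'e duality applied to $H_1(X;\mathbb{Z}[\pi])=0$ gives $H^3(X;\mathbb{Z}[\pi])=0$, so $d_3$ must be surjective. Its kernel $E_\infty^{2,0}$ is then the only contribution to $H^2(X;\mathbb{Z}[\pi])$, and the corresponding edge homomorphism $H^2(X;\mathbb{Z}[\pi])\to E_2^{2,0}$ is the standard evaluation of cocycles on cycles. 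This yields a short exact sequence of right $\mathbb{Z}[\pi]$-modules
\[
0\to H^2(X;\mathbb{Z}[\pi])\xrightarrow{ev}Hom_{\mathbb{Z}[\pi]}(\Pi,\mathbb{Z}[\pi])\to\mathbb{Z}^w\to0.
\]

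Finally I would apply the bar to pass to left $\mathbb{Z}[\pi]$-modules. By definition $\overline{Hom_{\mathbb{Z}[\pi]}(\Pi,\mathbb{Z}[\pi])}=\Pi^\dagger$, and a direct verification shows that $\overline{\mathbb{Z}^w}\cong\mathbb{Z}$ as left $\mathbb{Z}[\pi]$-modules, the $w$-twist in the right action being precisely cancelled by the $w$-twist in the bar conversion. The displayed sequence then becomes the one claimed. The one spot requiring real care is the spectral sequence bookkeeping: one has to confirm that $d_3$ is the only nontrivial differential affecting $E_r^{2,0}$ or $E_r^{0,3}$, so that the surjectivity of $d_3$ really does follow from the Poincar\'e-dual vanishing of $H^3(X;\mathbb{Z}[\pi])$.
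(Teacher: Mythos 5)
Your argument is correct and is exactly the paper's proof written out in full: the paper likewise extracts the sequence from the Universal Coefficient spectral sequence, using $H^q(\pi;\mathbb{Z}[\pi])=0$ for $q\le 2$, $H^3(\pi;\mathbb{Z}[\pi])\cong\mathbb{Z}$, and $H^3(X;\mathbb{Z}[\pi])=0$ to force the $d_3\colon E_3^{2,0}\to E_3^{0,3}$ differential to be onto. Your bookkeeping of the differentials and the final bar-conversion to left modules are both sound, so there is nothing to add.
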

 
\begin{proof}
This follows from the Universal Coefficient spectral sequence,
since $H^2(\pi;\mathbb{Z}[\pi])=0$, 
$H^3(\pi;\mathbb{Z}[\pi])\cong\mathbb{Z}$, 
and $H^3(X;\mathbb{Z}[\pi])=0$.
 (See  \cite[Lemma 3.3]{FMGK}.)
 \end{proof}
 
Let $D:\Pi\to{H^2(X;\mathbb{Z}[\pi])}$ be the isomorphism given by Poincar\'e duality.
Then the intersection pairing may be defined by
\[
\lambda_X(u,v)=ev(v)([X]\cap{u}),\quad\forall~u,v\in{H^2(X;\mathbb{Z}[\pi])}.
\]
(See \cite[Proposition 4.58]{Ra}.)
Then $\lambda_X(gu,gv)=w(g)\lambda_X(u,v)$ for all $g\in\pi$ 
and $u,v\in{H^2(X;\mathbb{Z}[\pi])}$.
Since $\Pi\not=0$ and $ev$ is a monomorphism, $\lambda_X$ is non-zero,
and so $w$ is determined by $\lambda_X$.

It is clear from the argument in \cite[Theorem 3.13]{FMGK}
that if $X$ is finite and $\pi$ is of type $FF$ then  
$\Pi\oplus\mathbb{Z}[\pi]^r\cong
\mathbb{Z}[\pi]^{\chi(X)-1}\oplus{I_\pi}\oplus\mathbb{Z}[\pi]^r$ 
for $r$ large, and so $[\Pi]=[I_\pi]$.
(In fact any two of the conditions ``$X$ is finite", 
``$\pi$ is of type $FF$" and ``$[\Pi]=[I_\pi]$" imply the third.)
The minimal value $\chi(X)=2$ is realized by the complexes $X(\pi)$ and $X(\pi)_\tau$ 
defined above.

In the 3-manifold group case $\widetilde{K}_0(\mathbb{Z}[\pi])=0$,
by work of Farrell and Jones, 
anticipating the Geometrization Theorem \cite{FJ87}.
In this case $X$ is finite and $\pi$ is of type $FF$,
and we again have $[\Pi]=[I_\pi]$.
This allows some of our statements to be simplified.
It is widely expected that $\widetilde{K}_0(\mathbb{Z}[G])=0$ for any
torsion-free group $G$, and we may assume this whenever convenient.
However even with this assumption there may be difficulties.
If $\pi$ is polycyclic but not abelian then there are ideals 
$J<\mathbb{Z}[\pi]$ such that 
$\mathbb{Z}[\pi]\oplus{J}\cong\mathbb{Z}[\pi]^2$ but which are not free \cite{Ar81}.

The case $\pi=\mathbb{Z}^3$ is exceptional,
for then all projective $\mathbb{Z}[\pi]$-modules are free.
Hence $\Pi^\dagger$ is free,
and it follows from Lemma \ref{evalseq}
that $\Pi\cong\mathbb{Z}[\pi]^{\chi(X)-1}\oplus{I_\pi}$.
Moreover,  if $K$ is any finite 2-complex with $\pi_1(K)\cong\mathbb{Z}^3$ and
$\chi(K)=1$ then $\pi_2(K)$ is free of rank 1, and so $K\simeq{T^3_o}$.

\section{the main theorem}

Let $\Gamma$ be the quadratic functor of Whitehead.
Let $L$ be a finitely generated left $\mathbb{Z}[\pi]$-module,
and let $Her_w(L^\dagger)$ be the abelian group of $w$-hermitean pairings on $L^\dagger$.
Then there is a natural homomorphism 
$B_L: \mathbb{Z}^w\otimes_{\mathbb{Z}[\pi]}\Gamma(L)\to{Her_w(L^\dagger)}$,
which is an isomorphism if $\pi$ is 2-torsion-free and $L$ is projective 
\cite[Theorem 1]{Hi20}.

If $\pi$ is a $PD_3$-group then it is torsion-free.
However the modules of interest to us are not projective, 
since $I_\pi$ has projective dimension 2.
We shall show that when $\Pi$ is stably equivalent to $I_\pi$ then $B_\Pi$ remains injective.
We first recall some details about  $\Gamma$ from \cite[Chapter 1.\S4]{Ba}.
Let $\gamma_A:A\to\Gamma(A)$ be the canonical quadratic map.
We may define a homomorphism $[-]:A\odot{A}\to\Gamma(A)$ by 
\[
[a\odot{b}]=\gamma(a+b)-\gamma(a)-\gamma(b).
\]
Then $[a\odot{a}]=2\gamma(a)$ for all $a\in{A}$.

As abelian groups, 
$\Gamma(A\oplus{B})\cong\Gamma(A)\oplus\Gamma(B)\oplus(A\otimes{B})$.
If $A$ and $B$ are $\mathbb{Z}[G]$-modules the summands are invariant 
under the action of $G$,
and so this direct sum splitting is a $\mathbb{Z}[G]$-module splitting.

The following lemma is close to the first part of \cite[Lemma 2.3]{HK88}
(which considered only finite groups $G$).

\begin{lemma}
\label{HKrerun}
Let $G$ be a group.
Then $\Gamma(\mathbb{Z}[G])\cong\mathbb{Z}[G]\oplus\Gamma(I_G)$
as a left $\mathbb{Z}[G]$-module.
\end{lemma}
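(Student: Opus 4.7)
The plan is to construct a natural split short exact sequence of left $\mathbb{Z}[G]$-modules
\[
0\to\Gamma(I_G)\to\Gamma(\mathbb{Z}[G])\xrightarrow{q}\mathbb{Z}[G]\to0.
\]
To define $q$, I first note that $a\mapsto\varepsilon(a)\cdot a$ is a quadratic map $\mathbb{Z}[G]\to\mathbb{Z}[G]$: its associated symmetric bilinear form is $(a,b)\mapsto\varepsilon(a)b+\varepsilon(b)a$, which is biadditive because $\varepsilon$ is. The universal property of $\Gamma$ then produces a unique homomorphism $q$ with $q(\gamma(a))=\varepsilon(a)a$ and $q([a\odot b])=\varepsilon(a)b+\varepsilon(b)a$, and since $\varepsilon$ is $G$-equivariant, so is $q$. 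An equivariant section $s:\mathbb{Z}[G]\to\Gamma(\mathbb{Z}[G])$ is given by left multiplication by $\gamma(1)$, so that $s(g)=g\cdot\gamma(1)=\gamma(g)$ and $qs(g)=\varepsilon(g)g=g$. This yields a $\mathbb{Z}[G]$-module decomposition $\Gamma(\mathbb{Z}[G])\cong\mathbb{Z}[G]\cdot\gamma(1)\oplus\ker(q)$, with the first summand free of rank one on $\gamma(1)$.

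Next I identify $\ker(q)$ with $\Gamma(I_G)$. The inclusion $I_G\hookrightarrow\mathbb{Z}[G]$ functorially induces a map $\Gamma(I_G)\to\Gamma(\mathbb{Z}[G])$ whose image lies in $\ker(q)$, since $\varepsilon$ vanishes on $I_G$. Both injectivity of this map and equality of its image with $\ker(q)$ will follow from the direct sum decomposition (applied to the non-equivariant splitting $\mathbb{Z}[G]=\mathbb{Z}\cdot 1\oplus I_G$ of abelian groups) recalled just before the lemma:
\[
\Gamma(\mathbb{Z}[G])\cong\Gamma(\mathbb{Z}\cdot 1)\oplus(\mathbb{Z}\cdot 1\otimes I_G)\oplus\Gamma(I_G).
\]
On the first two summands $q$ sends $\gamma(1)\mapsto 1$ and $[1\odot a]\mapsto a$ for $a\in I_G$, so it restricts to an abelian-group isomorphism onto $\mathbb{Z}\cdot 1\oplus I_G=\mathbb{Z}[G]$. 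Hence $\ker(q)$ coincides with the third summand $\Gamma(I_G)$ as a subgroup of $\Gamma(\mathbb{Z}[G])$, and therefore also as a $\mathbb{Z}[G]$-submodule.

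The main subtlety I expect is that the decomposition used in the previous paragraph is only an abelian-group splitting, since $\mathbb{Z}\cdot 1$ is not a $G$-submodule of $\mathbb{Z}[G]$; equivariance of the final decomposition is recovered only after bundling the first two non-equivariant summands into the single free module $\mathbb{Z}[G]\cdot\gamma(1)$ via the section $s$. This packaging is what is needed to extend the finite-group argument of \cite[Lemma 2.3]{HK88} to arbitrary $G$.
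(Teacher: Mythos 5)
Your proof is correct, and it takes a genuinely different (and arguably cleaner) route than the paper's for the key step. Both arguments rest on the same abelian-group decomposition $\Gamma(\mathbb{Z}[G])\cong\Gamma(\mathbb{Z})\oplus\Gamma(I_G)\oplus(\mathbb{Z}\otimes I_G)$, and both observe that only the middle summand is $G$-invariant. The paper then works with the quotient $\Gamma(\mathbb{Z}[G])/\Gamma(I_G)$: it computes the induced $G$-action on the generators $e=\gamma(1)$ and $i_g\otimes 1$ explicitly (this requires the relation $2(h.e)\equiv[1\odot 1]+2\,i_h\otimes 1$ together with torsion-freeness of the quotient in order to divide by $2$), identifies the quotient with $\mathbb{Z}[G]$, and the splitting then comes for free because the quotient is a free module. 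You instead build the splitting directly: the quadratic map $a\mapsto\varepsilon(a)a$ induces, by the universal property of $\Gamma$, an equivariant retraction $q$ with the explicit section $x\mapsto x\cdot\gamma(1)$, and the non-equivariant decomposition is used only to identify $\ker(q)$ with $\Gamma(I_G)$. What your approach buys is that you never have to compute the twisted $G$-action on the complement or invoke the divide-by-two / torsion-freeness step; what the paper's approach buys is an explicit description of how $G$ acts on the non-invariant generators, which makes the identification of the quotient with $\mathbb{Z}[G]$ (rather than some other extension of $\mathbb{Z}[G]$-modules) completely transparent. All the verifications in your argument check out: $a\mapsto\varepsilon(a)a$ is quadratic with biadditive cross-term $\varepsilon(a)b+\varepsilon(b)a$, $q$ is equivariant since $\varepsilon(ga)=\varepsilon(a)$, $qs=\mathrm{id}$ on the basis $G$, and $q$ kills $\Gamma(I_G)$ while restricting to an isomorphism on the other two summands.
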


\begin{proof}
Let $i_g=g-1$, for $g\in{G}$, 
and let $j:\mathbb{Z}\to\mathbb{Z}[G]$ be the canonical ring homomorphism.
Then $I_G$ is free with basis $\{i_g\mid{g\in{G}}\}$, and
$\mathbb{Z}[G]\cong\mathrm{Im}(j)\oplus{I_G}$ as abelian groups.
Hence $\Gamma(\mathbb{Z}[G])$ splits as a direct sum of abelian groups
$\Gamma(\mathbb{Z})\oplus\Gamma(I_G)\oplus(I_G\otimes\mathbb{Z})$.
The middle summand is a $\mathbb{Z}[G]$-submodule,
but the others are not.

The complement of $\Gamma(I_G)$ in $\Gamma(\mathbb{Z}[G])$
is freely generated (as an abelian group)
by the elements $e=\gamma_{\mathbb{Z}[G]}(1)$ and
$\{i_g\otimes1\mid{g\in{G}}\}$,
and so the quotient $\Gamma(\mathbb{Z}[G])/ \Gamma(I_G)$
is freely generated by the images of these elements.
The group $G$ acts on the basis elements by $h.1=1+i_h$
and $h.i_g=i_{hg}-i_h$.
Hence 
\[
2(h.e)=
h.2\gamma_{\mathbb{Z}[G]}(1)=h([1\odot1])\equiv[1\odot1]+2i_h\otimes1
~mod~\Gamma(I_G).
\]
Since $[1\odot1]=2e$ and $\Gamma(\mathbb{Z}[G])/\Gamma(I_G)$ is torsion-free (as an abelian group), 
\[
h.e\equiv{e+i_h\otimes1}~mod~\Gamma(I_G).
\]
We also have
\[
h.(i_g\otimes1)\equiv{i_{hg}\otimes1-i_g\otimes1}~mod~\Gamma(I_G).
\]
Thus the bijection sending $i_g$ to $i_g\otimes1$ and $1$ to $e$ defines 
an isomorphism $\mathbb{Z}[G]\cong\Gamma(\mathbb{Z}[G])/ \Gamma(I_G)$.
Hence $\Gamma(\mathbb{Z}[G])\cong\mathbb{Z}[G]\oplus\Gamma(I_G)$.
\end{proof}

We may strengthen this result as follows.

\begin{lemma}
\label{summand}
If $\Pi\oplus\mathbb{Z}[\pi]^r\cong\mathbb{Z}[\pi]^s\oplus{I_\pi}$ 
then $\Gamma(\Pi)$ is a direct summand of $\Gamma(\mathbb{Z}[\pi]^{s+1})$.
\end{lemma}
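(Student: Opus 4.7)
The plan is to apply the Whitehead functor $\Gamma$ to the given stable equivalence and repeatedly use the natural direct-sum decomposition $\Gamma(A\oplus B)\cong\Gamma(A)\oplus\Gamma(B)\oplus(A\otimes B)$ (which, as noted in the preamble, respects the diagonal $\pi$-action), together with Lemma \ref{HKrerun}.

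First, applying $\Gamma$ to $\Pi\oplus\mathbb{Z}[\pi]^r\cong\mathbb{Z}[\pi]^s\oplus I_\pi$ and expanding both sides yields an isomorphism of $\mathbb{Z}[\pi]$-modules
\[
\Gamma(\Pi)\oplus\Gamma(\mathbb{Z}[\pi]^r)\oplus(\Pi\otimes\mathbb{Z}[\pi]^r)\cong\Gamma(\mathbb{Z}[\pi]^s)\oplus\Gamma(I_\pi)\oplus(\mathbb{Z}[\pi]^s\otimes I_\pi),
\]
exhibiting $\Gamma(\Pi)$ as a summand of $\Gamma(\mathbb{Z}[\pi]^s\oplus I_\pi)$. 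On the other hand, expanding $\Gamma(\mathbb{Z}[\pi]^{s+1})=\Gamma(\mathbb{Z}[\pi]^s\oplus\mathbb{Z}[\pi])$ and invoking Lemma \ref{HKrerun} gives
\[
\Gamma(\mathbb{Z}[\pi]^{s+1})\cong\Gamma(\mathbb{Z}[\pi]^s)\oplus\mathbb{Z}[\pi]\oplus\Gamma(I_\pi)\oplus(\mathbb{Z}[\pi]^s\otimes\mathbb{Z}[\pi]).
\]

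Comparing the two expressions, it will suffice to realize $\mathbb{Z}[\pi]^s\otimes I_\pi$ as a direct summand of $\mathbb{Z}[\pi]\oplus(\mathbb{Z}[\pi]^s\otimes\mathbb{Z}[\pi])$. For this I tensor the augmentation sequence $0\to I_\pi\to\mathbb{Z}[\pi]\to\mathbb{Z}\to0$ over $\mathbb{Z}$ with $\mathbb{Z}[\pi]^s$ (with diagonal $\pi$-action), obtaining the short exact sequence of $\mathbb{Z}[\pi]$-modules
\[
0\to\mathbb{Z}[\pi]^s\otimes I_\pi\to\mathbb{Z}[\pi]^s\otimes\mathbb{Z}[\pi]\to\mathbb{Z}[\pi]^s\to0.
\]
Since the quotient $\mathbb{Z}[\pi]^s$ is free, this sequence splits as $\mathbb{Z}[\pi]$-modules, so $\mathbb{Z}[\pi]^s\otimes\mathbb{Z}[\pi]\cong(\mathbb{Z}[\pi]^s\otimes I_\pi)\oplus\mathbb{Z}[\pi]^s$. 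Substituting this into the second display produces
\[
\Gamma(\mathbb{Z}[\pi]^{s+1})\cong\Gamma(\mathbb{Z}[\pi]^s\oplus I_\pi)\oplus\mathbb{Z}[\pi]^{s+1},
\]
and combining with the first display shows that $\Gamma(\Pi)$ is a direct summand of $\Gamma(\mathbb{Z}[\pi]^{s+1})$.

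The main point to check is that all of this bookkeeping is happening equivariantly: in particular, that the splitting of the tensored augmentation sequence is genuinely $\mathbb{Z}[\pi]$-linear (which follows immediately from freeness of the quotient) and that the direct-sum decomposition of $\Gamma$ is one of $\mathbb{Z}[\pi]$-modules when the diagonal action is used on the cross-term. Both of these are accounted for by what is already stated in the excerpt, so no further technical input should be required.
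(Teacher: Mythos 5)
Your proof is correct and follows essentially the same route as the paper's: expand $\Gamma$ of the direct sums, invoke Lemma \ref{HKrerun} to replace $\Gamma(\mathbb{Z}[\pi])$ by $\mathbb{Z}[\pi]\oplus\Gamma(I_\pi)$, and compare the cross-terms. The only cosmetic difference is that you split the tensored augmentation sequence to get $\mathbb{Z}[\pi]^s\otimes\mathbb{Z}[\pi]\cong(\mathbb{Z}[\pi]^s\otimes I_\pi)\oplus\mathbb{Z}[\pi]^s$, whereas the paper identifies the diagonal tensor products directly as free modules via explicit bases; both handle the equivariance correctly.
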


\begin{proof}
Since $\Gamma(\Pi)$ is a direct summand of $\Gamma(\Pi\oplus\mathbb{Z}[\pi]^r)$,
it shall suffice to assume that $\Pi\cong\mathbb{Z}[\pi]^s\oplus{I_\pi}$.
We may compare the splittings
\[
\Gamma(\mathbb{Z}[\pi]^s\oplus{I_\pi})=\Gamma(\mathbb{Z}[\pi]^s)\oplus
\Gamma(I_\pi)\oplus(\mathbb{Z}[\pi]^s\otimes{I_\pi})
\]
and
\[
\Gamma(\mathbb{Z}[\pi]^{s+1})=\Gamma(\mathbb{Z}[\pi]^s)\oplus
\Gamma(\mathbb{Z}[\pi])\oplus(\mathbb{Z}[\pi]^s\otimes\mathbb{Z}[\pi]).
\]
If the abelian group underlying a $\mathbb{Z}[\pi]$-module $M$ is free abelian 
with basis $\{m_i\}$ then the tensor products $M\otimes\mathbb{Z}[\pi]$ and 
$\mathbb{Z}[\pi]\otimes{M}$ with the diagonal left $\mathbb{Z}[\pi]$-structures 
are free $\mathbb{Z}[\pi]$-modules with bases $\{m_i\otimes1\}$ and $\{1\otimes{m_i}\}$,
respectively.
Hence
\[
(\mathbb{Z}[\pi]^s\otimes\mathbb{Z}[\pi])\oplus\Gamma(\mathbb{Z}[\pi])\cong
(\mathbb{Z}[\pi]^s\otimes{I_\pi})\oplus\mathbb{Z}[\pi]\oplus\Gamma(I_\pi)\oplus\mathbb{Z}[\pi],
\]
and so $\Gamma(\mathbb{Z}[\pi]^s\oplus{I_\pi})$ is a direct summand of 
$\Gamma(\mathbb{Z}[\pi]^{s+1})$.
\end{proof}
 
If $[\Pi]=[I]$  then $[\Pi^w]=[I^w]$.
Therefore $H_i(\pi;\Pi^w)\cong{H_{i+1}(\pi;\mathbb{Z}^w)}$ for $i>0$.
Hence $H_2(\pi;\Pi^w)\cong\mathbb{Z}$ and $H_3(\pi;\Pi^w)=0$.

\begin{theorem}
\label{mainthm}
Let $X$ be a $PD_4$-complex such that $\pi=\pi_1(X)$ is a $PD_3$-group
and $w_1(X)=w_1(\pi)$.
Then the homotopy type of $X$ is determined by $\pi$, 
$\Pi=\pi_2(X)$,  $\kappa=k_1(X)$ and $\lambda_X$.
\end{theorem}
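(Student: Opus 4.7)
The plan is to pass to the Postnikov $2$-type $P$, which is determined by $(\pi,\Pi,\kappa)$, and to show that the residual homotopy data of $X$ over $P$ is captured by $\lambda_X$ through the homomorphism $B_\Pi$. Write $c_X\colon X\to P$ for the classifying map (which is $3$-connected) and likewise $c_{X'}\colon X'\to P$ for a second $PD_4$-complex with the same invariants. Following the pattern of Wall, Baues, and Hambleton--Kreck, a homotopy equivalence $X\simeq X'$ over $P$ exists precisely when $c_{X*}[X]=c_{X'*}[X']$ in $H_4(P;\mathbb{Z}^w)$; passage to the full homotopy-equivalence statement is absorbed by the action of self-equivalences of $P$ fixing $(\pi,\Pi,\kappa)$. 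The task therefore reduces to computing $H_4(P;\mathbb{Z}^w)$ and showing that $\lambda_X$ pins down $c_{X*}[X]$ in it.

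For the computation I would apply the Cartan--Leray spectral sequence to the fibration $K(\Pi,2)\to P\to K(\pi,1)$:
\[
E^2_{p,q}=H_p(\pi;H_q(K(\Pi,2);\mathbb{Z})^w)\Rightarrow H_{p+q}(P;\mathbb{Z}^w).
\]
Using $H_*(K(\Pi,2);\mathbb{Z})=\mathbb{Z},0,\Pi,0,\Gamma(\Pi),\dots$ in low degrees, the $PD_3$-group hypothesis ($H_p(\pi;-)=0$ for $p>3$), and the identifications $H_3(\pi;\Pi^w)=0$, $H_2(\pi;\Pi^w)\cong\mathbb{Z}$ from the excerpt, the relevant edge terms collapse the filtration of $H_4(P;\mathbb{Z}^w)$ into a short exact sequence
\[
0\to\mathbb{Z}^w\otimes_{\mathbb{Z}[\pi]}\Gamma(\Pi)\to H_4(P;\mathbb{Z}^w)\to\mathbb{Z}\to 0.
\]
Both $c_{X*}[X]$ and $c_{X'*}[X']$ project to the same canonical generator of this $\mathbb{Z}$-quotient (traceable, via Poincar\'e duality in $\pi$, to the fundamental class of $\pi$), so their difference lies in $\mathbb{Z}^w\otimes_{\mathbb{Z}[\pi]}\Gamma(\Pi)$.

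The crucial identification is that $B_\Pi$ carries the class of $c_{X*}[X]$ in $\mathbb{Z}^w\otimes_{\mathbb{Z}[\pi]}\Gamma(\Pi)$ to $\lambda_X\in\mathrm{Her}_w(\Pi^\dagger)$; this reflects Whitehead's interpretation of $\Gamma(\Pi)$ as parameterizing quadratic refinements, combined with the cohomological description of $\lambda_X$ via Lemma \ref{evalseq} and Poincar\'e duality on $X$. Given $\lambda_X=\lambda_{X'}$, the difference $c_{X*}[X]-c_{X'*}[X']$ therefore lies in $\ker B_\Pi$. To conclude I would invoke injectivity of $B_\Pi$: by \cite[Theorem 3.13]{FMGK} we may write $\Pi\oplus\mathbb{Z}[\pi]^r\cong\mathbb{Z}[\pi]^s\oplus I_\pi$ for some $r,s$, so Lemma \ref{summand} realizes $\Gamma(\Pi)$ as a direct summand of the free module $\Gamma(\mathbb{Z}[\pi]^{s+1})$; since $B$ is an isomorphism on free modules by \cite[Theorem 1]{Hi20}, $B_\Pi$ is injective as a retract of an isomorphism. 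Hence $c_{X*}[X]=c_{X'*}[X']$, and thus $X\simeq X'$.

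The main obstacle is the identification $B_\Pi(c_{X*}[X])=\lambda_X$ in the third paragraph: it is a naturality statement linking a spectral-sequence edge map to the cohomological cap--evaluation pairing, and it must be verified on a chain-level model of the $2$-type $P$. A secondary concern is ensuring that no self-equivalence of $P$ fixing $(\pi,\Pi,\kappa)$ can act nontrivially on the $\Gamma(\Pi)$-summand of $H_4(P;\mathbb{Z}^w)$ while preserving $\lambda_X$; this should follow from naturality of $\lambda_X$ under $PD_4$-complex maps, but deserves explicit verification.
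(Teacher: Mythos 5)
Your proposal follows essentially the same route as the paper: reduction to $P_2(X)$ and the image of the fundamental class via Baues--Bleile, the short exact sequence $0\to\mathbb{Z}^w\otimes_{\mathbb{Z}[\pi]}\Gamma(\Pi)\to H_4(P_2(X);\mathbb{Z}^w)\to H_2(\pi;\Pi^w)\to 0$, and injectivity of $B_\Pi$ via Lemma \ref{summand} and \cite{Hi20}. The only cosmetic difference is that the paper packages your ``crucial identification'' as a map $\theta$ defined on all of $H_4(P_2(X);\mathbb{Z}^w)$ with $\theta([X]_P)=\lambda_X$ and $\theta\phi=B_\Pi\psi$, applying injectivity only to the difference of the two classes (which, unlike $[X]_P$ itself, does lie in the $\Gamma(\Pi)$ term).
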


\begin{proof}
Let  $w=w_1(X)$ and let $[X]_P\in{H_4(P_2(X);\mathbb{Z}^w)}$ be the image of a fundamental class for $X$.
Then the homotopy type of $X$ is determined by 
$P_2(X)$ and $[X]_P$  \cite[Theorem 3.1]{BB}.
(This was first proven in \cite[Theorem 1.1]{HK88}, 
assuming also that $\beta_2(X;\mathbb{Q})>0$.)
The invariants $\pi,\Pi$ and $\kappa$ determine $P_2(X)$.
We shall show that $\lambda_X$ determines $[X]_P$. 

The universal cover of $P_2(X)$ is a $K(\Pi,2)$-space,
and the `boundary" homomorphism $b:H_4(\Pi,2;\mathbb{Z})\cong\Gamma(\Pi)$
of Whitehead is an isomorphism, since $\pi_i(K(\Pi,2))=0$ for $i\not=2$ 
(see \cite[1.3.7]{Ba}.
Hence $\psi=\mathbb{Z}^w\otimes_\Gamma{b}$ is also an isomorphism.
The Cartan-Leray spectral sequences for the universal covers give epimorphisms
$\delta_X:H_4(X;\mathbb{Z}^w)\to{H_2(\pi;\Pi^w)}$ and 
$\delta_P:H_4(P_2(X);\mathbb{Z}^w)\to{H_2(\pi;\Pi^w)}$,
since $c.d.\pi=3$.
Since $\pi$ has one end, $\delta_X$ is an isomorphism,
and so $H_2(\pi;\Pi^w)\cong\mathbb{Z}$.
There is also an exact sequence 
\begin{equation*}
\begin{CD}
0\to\mathbb{Z}^w\otimes_{\mathbb{Z}[\pi]}H_4(\Pi,2;\mathbb{Z})@>\phi>>
{H_4(P_2(X);\mathbb{Z}^w)}@>\delta_P>>{H_2(\pi;\Pi^w)}\to0.
\end{CD}
\end{equation*}
Since $\mathbb{Z}^w\otimes_{\mathbb{Z}[\pi]}\Gamma(\Pi)$ is a direct summand 
of $\mathbb{Z}^w\otimes_{\mathbb{Z}[\pi]}\Gamma(\mathbb{Z}[\pi]^{s+1})$,
by Lemma \ref{summand}, 
and since $B_M$ is an isomorphism if $M$ is a finitely generated projective module \cite[Theorem 2]{Hi20}, $B_\Pi$ is a monomorphism.

Let $\theta:H_4(P_2(X);\mathbb{Z}^w)\to{Her_w(\Pi^\dagger)}$
be the function defined by
\[
\theta(\xi)(u,v)=v(u\cap\xi)\quad\forall~u,v\in{H^2(X;\mathbb{Z}[\pi])}~\mathrm{and}~
\xi\in{H_4(P_2(X);\mathbb{Z}^w)}.
\]
Then $\theta([X]_P)=\lambda_X$,
and $\theta\phi=B_\Pi\psi$.
Hence $\theta\phi$ is  a monomorphism.

Suppose that $X_1$ is a second such $PD_4$-complex 
and $h:P_2(X_1)\to{P_2(X)}$ is a homotopy equivalence 
which induces an isometry $\lambda_{X_1}\cong\lambda_X$.
Then $\theta(h_*[X_1]_P)=\theta([X]_P)$.
Since these pairings are non-trivial,
 the images of $h_*[X_1]_P$ and $[X]_P$ in $H_2(\pi;\Pi^w)$ agree,
and so $h_*[X_1]_P-[X]_P$ is in the image of $\phi$.
Hence $h_*[X_1]_P=[X]_P$, since $\theta\phi$ is a monomorphism,
and so $X_1\simeq{X}$ \cite[Theorem 3.1]{BB}.
\end{proof}

To what extent can we extend this argument to more general 3-manifold groups?
If $\pi\cong{F(r)}$ is a free group then $\Pi$ is free as a $\mathbb{Z}[\pi]$-module and the inclusion $\mathbb{Z}^w\otimes_{\mathbb{Z}[\pi]}H_4(\Pi,2;\mathbb{Z})$
is an isomorphism,
and the conclusion of Theorem \ref{mainthm} holds \cite{Hi04}.
If $\pi$ is a 3-manifold group which is torsion-free but not free then it is a free product
$F(r)*(*_{i=1}^sG_i)$, where the factors $G_i$ are $PD_3$-groups and $s>0$.
In this case the end module $H^1(\pi;\mathbb{Z}[\pi])$ is a free $\mathbb{Z}[\pi]$-module
\cite[Lemma 2]{Hi20}.
Hence $H_3(X;\mathbb{Z}[\pi])$ is free and $\delta_X$ is again an isomorphism.
Using arguments similar to those in \cite[Theorem 3.13]{FMGK},
we may show that $\Pi$ has a finite projective resolution of length 2, 
$\Pi^\dagger$ is a free $\mathbb{Z}[\pi]$-module and 
$Ext_{\mathbb{Z}[\pi]}^1(\Pi,\mathbb{Z}[\pi])=0$.
However in this case it is not known whether $B_\Pi$ is a monomorphism.

Torsion in $\pi$ complicates matters further. 
However the cases when $X$ is orientable and
$\pi\cong\mathbb{Z}\oplus(\mathbb{Z}/2\mathbb{Z})$ or is a free product of cyclic groups
are treated successfully in \cite{KPR}.

\section{$k_1$ and retractions onto $Y_o$}

The first $k$-invariant is an element of $H^3(\pi;\Pi)$,
and is well-defined up to the actions of $Aut(\pi)$ and $Aut_\pi(\Pi)$.
If $Z$ is a cell complex we may assume that the Postnikov 2-stage $f_2(Z):Z\to{P_2(Z)}$ 
is an inclusion, and that $P_2(Z)$ is obtained from $Z$ by adding cells of dimension $\geq4$.

\begin{lemma}
\label{pdnk}
Let $G$ be a $PD_n$-group and let $C_*$ be a projective resolution
of the augmentation module $\mathbb{Z}$ of length $n$ 
such that $C_n\cong\mathbb{Z}[G]$.
Then the class $[C_*]$ of $C_*$ in 
$H^n(G;C_n)=Ext^n_{\mathbb{Z}[G]}(\mathbb{Z},C_n)\cong\mathbb{Z}$ is a generator.
\end{lemma}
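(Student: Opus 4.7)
The plan is to compute $Ext^n_{\mathbb{Z}[G]}(\mathbb{Z},C_n)$ using $C_*$ itself as the resolving complex, and to track where the class $[C_*]$ sits. Applying $Hom_{\mathbb{Z}[G]}(-,C_n)$ to $C_*$ gives a complex whose top cohomology is
\[
Hom_{\mathbb{Z}[G]}(C_n,C_n)/d^*(Hom_{\mathbb{Z}[G]}(C_{n-1},C_n)).
\]
Under the standard correspondence between $Ext^n$ and Yoneda $n$-extensions, the class of any length-$n$ projective resolution $C_*$ of $\mathbb{Z}$ by $C_n$ is represented by $id_{C_n}$: the extension is $C_*$ itself, and the chain lift of $id_{\mathbb{Z}}$ from the resolving complex $C_*$ to the extension $C_*$ is simply the identity, whose degree-$n$ component is $id_{C_n}$. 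Thus the problem reduces to showing that $id_{C_n}$ projects to a generator of the quotient.

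Using the hypothesis $C_n\cong\mathbb{Z}[G]$, the identification $Hom_{\mathbb{Z}[G]}(\mathbb{Z}[G],\mathbb{Z}[G])\cong\mathbb{Z}[G]$ via $f\mapsto f(1)$ sends $id_{C_n}$ to $1\in\mathbb{Z}[G]$. By the $PD_n$-hypothesis on $G$, the cokernel $H^n(G;\mathbb{Z}[G])$ is the dualizing module $\mathbb{Z}^{w_1(G)}$; as an abelian group this is $\mathbb{Z}$, with $G$ acting through $\{\pm1\}$.

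It remains to verify that the quotient map $q:\mathbb{Z}[G]\twoheadrightarrow\mathbb{Z}^{w_1(G)}$ sends $1$ to a generator of the underlying infinite cyclic group. Since $1$ generates $\mathbb{Z}[G]$ as a left $\mathbb{Z}[G]$-module and $q$ is surjective, $q(1)$ generates $\mathbb{Z}^{w_1(G)}$ as a $\mathbb{Z}[G]$-module; and because $G$ acts through signs, any $\mathbb{Z}[G]$-module generator of $\mathbb{Z}^{w_1(G)}$ is already an abelian-group generator. Hence $q(1)=\pm1$ and $[C_*]$ generates $H^n(G;C_n)\cong\mathbb{Z}$.

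I do not anticipate a serious obstacle. The only non-trivial inputs are the fact that the Yoneda class of a length-$n$ projective resolution is represented by the identity in the cokernel above, and the characterisation of the dualizing module for a $PD_n$-group as $\mathbb{Z}^{w_1(G)}$ — both of which are essentially definitional. The final sign-action observation is what makes the argument close cleanly, and it is special to the fact that the target module is $\mathbb{Z}$ with a sign $G$-action rather than a more general module.
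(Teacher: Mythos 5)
Your proof is correct, but it takes a genuinely different route from the paper's. The paper argues by contradiction using Dold's homotopy theory of chain complexes: it truncates $C_*$ to get a complex of type $(\mathbb{Z},0,C_n,n-1)$ whose $k$-invariant is $[C_*]$, and observes that if $[C_*]$ were zero (resp.\ divisible by a prime $p$) then $\partial_n$ would split up to chain homotopy (resp.\ split mod $p$), forcing $H^n(G;\mathbb{Z}[G])=0$ (resp.\ $H^n(G;\mathbb{F}_p[G])=0$), contradicting Poincar\'e duality; indivisibility in $\mathbb{Z}$ then gives the result. You instead compute the class directly: under the standard comparison of the Yoneda class with the resolution description of $Ext^n$, $[C_*]$ is the image of $id_{C_n}\leftrightarrow 1\in\mathbb{Z}[G]$ under the quotient $Hom(C_n,C_n)\cong\mathbb{Z}[G]\twoheadrightarrow H^n(G;\mathbb{Z}[G])\cong\mathbb{Z}^{w_1(G)}$, and a $\mathbb{Z}[G]$-module generator of a module whose underlying group is $\mathbb{Z}$ is automatically a generator of that group (indeed any $G$-action on $\mathbb{Z}$ is through $\{\pm1\}$, so you do not even need to invoke $w_1$). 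Your argument is more elementary --- it avoids \cite[Satz 7.6]{Do60} entirely --- and more informative, since it pins down the generator explicitly as the image of $1$; it also isolates exactly where the $PD_n$ hypothesis enters, namely only in knowing that $H^n(G;\mathbb{Z}[G])\cong\mathbb{Z}$ as an abelian group. The paper's softer argument, by contrast, uses only the nonvanishing of $H^n(G;\mathbb{Z}[G])$ and $H^n(G;\mathbb{F}_p[G])$ and would survive in settings where one has less explicit control of the top endomorphism group. The one point worth making explicit in your write-up is that the image of $d^*$ is a $\mathbb{Z}[G]$-submodule of $Hom(C_n,C_n)$ (for the module structure induced from the coefficient bimodule $\mathbb{Z}[G]$), so that the quotient map really is a module epimorphism; this is immediate but is the hinge of your final step.
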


\begin{proof}
Let $D_*$ be the chain complex with $D_i=C_i$ for $i<n$
and $D_i=0$ for $i\geq{n}$,
and let $E_*$ be the chain complex with $E_i=D_i$ for $i\not=n-1$ and 
$E_{n-1}=C_{n-1}/Z_{n-1}\oplus{C_n}$.
Then $D_*$ and $E_*$ are of type $(\mathbb{Z},0, C_n, n-1)$ 
as defined in \cite[Definition 7.1]{Do60}.
The $k$-invariant of $D_*$ is represented by the class  $[C_*]$, 
while the $k$-invariant of $E_*$ is 0.

If $[C_*]=0$ then there is a c.h.e.  $f:D_*\to{E_*}$ \cite[Satz 7.6]{Do60},
and since $H_{n-1}(f)$ is an isomorphism we see that $C_*$ is chain homotopy 
equivalent to a sequence in which $\partial_n$ is a split injection.
Hence $H^n(C_*;\mathbb{Z}[G])=0$, 
contrary to hypothesis.

Similarly, if $[C_*]$ is a $p$-fold multiple of some other class for some prime $p$
then $H^n(C_*;\mathbb{F}_p[G])=0$,  again contrary to hypothesis.
Therefore $[C_*]$ is indivisible, and so is a generator of 
$H^n(G;\mathbb{Z}[G])\cong\mathbb{Z}$.
\end{proof}

\begin{lemma}
\label{cc3}
Let $\pi$ be a $PD_3$-group and $z$ be a generator of $H^3(\pi;\mathbb{Z}[\pi])$.
If $\mathcal{M}$ is a finitely generated $\mathbb{Z}[\pi]$-module and
$\omega\in{H^3(\pi;\mathcal{M})}$ then there is a homomorphism 
$h:\mathbb{Z}[\pi]\to\mathcal{M}$
such that $H^3(h;\mathbb{Z}[\pi])(z)=\omega$.
\end{lemma}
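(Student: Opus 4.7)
My plan is to choose a projective resolution of $\mathbb{Z}$ whose top term is $\mathbb{Z}[\pi]$ itself, realize $z$ as the class of that resolution via Lemma \ref{pdnk}, and then read the conclusion directly off the cokernel description of $H^3(\pi;\mathcal{M})$.

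First, I would invoke the model $Y = Y_o \cup D^3$ for $K(\pi,1)$ set up at the start of the paper. Since $Y$ is built from the cohomologically $2$-dimensional space $Y_o$ by attaching a single $3$-cell, the cellular chain complex of the universal cover $\widetilde{Y}$ supplies a free $\mathbb{Z}[\pi]$-resolution $C_\ast$ of $\mathbb{Z}$ of length $3$ with $C_3 \cong \mathbb{Z}[\pi]$, generated by the lift of the unique $3$-cell. By Lemma \ref{pdnk}, $[C_\ast]$ generates $H^3(\pi;\mathbb{Z}[\pi]) \cong \mathbb{Z}$; after possibly negating, I may assume $z = [C_\ast]$.

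Next, using this resolution, $H^3(\pi;\mathcal{M})$ is computed as the cokernel of $\partial_3^\ast : \mathrm{Hom}_{\mathbb{Z}[\pi]}(C_2,\mathcal{M}) \to \mathrm{Hom}_{\mathbb{Z}[\pi]}(C_3,\mathcal{M})$, and evaluation at $1$ identifies $\mathrm{Hom}_{\mathbb{Z}[\pi]}(C_3,\mathcal{M})$ with $\mathcal{M}$. Unwinding the Yoneda description in Lemma \ref{pdnk}, the generator $z$ corresponds under this identification (applied to $\mathcal{M}=\mathbb{Z}[\pi]$) to the class of $1 \in \mathbb{Z}[\pi]$, i.e.\ to the identity cocycle $\mathrm{id}_{\mathbb{Z}[\pi]}$. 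So, given $\omega$, I would lift it to a representative $m \in \mathcal{M}$ and define $h : \mathbb{Z}[\pi] \to \mathcal{M}$ by $h(a) = a\cdot m$. The induced map $H^3(\pi;h)$ acts on cocycles by post-composition, so $H^3(\pi;h)(z)$ is the class of $h\circ\mathrm{id}_{\mathbb{Z}[\pi]} = h$, which evaluates at $1$ to $m$ and so represents $\omega$.

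The only real subtlety is in the first step: arranging that the top term of the resolution is $\mathbb{Z}[\pi]$ itself rather than some other finitely generated projective. This is precisely what the $PD_3$-complex model of $Y$ with a single top cell delivers; once it is in hand, everything else is a short unwinding of the Ext-cokernel presentation.
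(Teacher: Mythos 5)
Your argument is correct in substance but takes a genuinely different route from the paper's. The paper's proof is soft homological algebra: choose an epimorphism $f:\mathbb{Z}[\pi]^g\to\mathcal{M}$, note that $H^3(f)$ is onto because $c.d.\pi=3$ makes top-degree cohomology right exact, observe that every element of $H^3(\pi;\mathbb{Z}[\pi]^g)\cong\mathbb{Z}^g$ is the image of $z$ under some homomorphism $\mathbb{Z}[\pi]\to\mathbb{Z}[\pi]^g$, and compose; it needs neither a particular resolution nor Lemma \ref{pdnk}. You instead fix a length-$3$ resolution with top term $\mathbb{Z}[\pi]$, use Lemma \ref{pdnk} to identify $z$ (up to sign) with the class of the identity cocycle, and read $h$ off a cocycle representative of $\omega$. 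What this buys is an explicit formula $h(a)=a\cdot m$ and a transparent reason the lemma holds -- $z$ \emph{is} $\mathrm{id}_{\mathbb{Z}[\pi]}$ in the cokernel presentation -- at the cost of two extra inputs you should tighten. First, the resolution: $Y_o$ is only \emph{cohomologically} $2$-dimensional, not literally a $2$-complex, so you cannot simply point to the cellular chain complex of $\widetilde{Y}$ and its ``unique $3$-cell''; you must first replace $C_*(Y_o;\mathbb{Z}[\pi])$ by a chain homotopy equivalent projective complex $D_*$ vanishing above degree $2$ and then adjoin $H_2(D_*)\cong\pi_2(Y_o)\cong\mathbb{Z}[\pi]$ in degree $3$, exactly as the paper does in the proof of Theorem \ref{retract} (and note Lemma \ref{pdnk} only requires projective, not free, lower terms). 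Second, a sign point: $z$ is given in the statement, so you cannot ``assume'' $z=[C_*]$; if $z=-[C_*]$ you instead replace your representative $m$ by $-m$. Both issues are cosmetic, and the proof goes through.
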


\begin{proof}
Let $f:\mathbb{Z}[\pi]^g\to\mathcal{M}$ be an epimorphism.
Since $c.d.\pi=3$ the change of coefficients homomorphism 
$H^3(f):H^3(\pi;\mathbb{Z}^g)\to{H^3(\pi;\mathcal{M})}$ is also an epimorphism.
It is easy to see that every element of $H^3(\pi;\mathbb{Z}^g)$
($\cong\mathbb{Z}^g$)
is the image of $z$ under a change of coefficients homomorphism.
The result follows. 
\end{proof}

In the next theorem we shall assume that $X$ and $Y$ are as defined in \S1 above.

\begin{theorem}
\label{retract}
The Postnikov $2$-stage $P_2(X)$ retracts onto $P_2(Y_o)$ if and only if 
$\Pi\cong\mathbb{Z}[\pi]\oplus{L}$,  for some $L$,
and the image of $k_1(X)$ in $H^3(\pi;\mathbb{Z}[\pi])$ is a generator.
If $X'$ is another such $PD_4$-complex and there
is an isomorphism $f:\pi_1(X')\to\pi_1(X)=\pi$
such that $w_1(X')=f^*w_1(X)$ and that $\pi_2(X')\cong\pi_2(X)=\Pi$ then $P_2(X')\simeq{P_2(X)}$.
\end{theorem}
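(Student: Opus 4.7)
The plan is to identify $k_1(Y_o)$ as a generator of $H^3(\pi;\Z[\pi])$ and then use Lemma~\ref{cc3} to align the remaining $L$-component of $k_1(X)$. Since $Y_o$ is cohomologically $2$-dimensional with $\pi_2(Y_o)\cong\Z[\pi]$ free, we may realize $Y_o$ as a $2$-complex. The cellular chain complex of $\widetilde{Y_o}$, extended on top by the identification $\ker\partial_2=\pi_2(Y_o)=\Z[\pi]$, is a projective resolution
\[
0\to\Z[\pi]\to C_2\to C_1\to C_0\to\Z\to0
\]
of length $3$, and its class in $Ext^3_{\Z[\pi]}(\Z,\Z[\pi])\cong H^3(\pi;\Z[\pi])$ equals $k_1(Y_o)$. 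By Lemma~\ref{pdnk} this class is a generator of $H^3(\pi;\Z[\pi])\cong\Z$.

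For the ``only if'' direction, a retraction $\rho:P_2(X)\to P_2(Y_o)$ with section $\iota$ induces a split surjection $\rho_*:\Pi\to\Z[\pi]$ on $\pi_2$, so $\Pi\cong\Z[\pi]\oplus L$ with $L=\ker\rho_*$; naturality of first $k$-invariants then forces $(\rho_*)_*(k_1(X))=k_1(Y_o)$, a generator. For the converse, fix a splitting $\Pi=\Z[\pi]\oplus L$ and decompose $k_1(X)=(a,b)\in H^3(\pi;\Z[\pi])\oplus H^3(\pi;L)$; after multiplying the projection to $\Z[\pi]$ by $\pm1$ if necessary, we may assume $a=k_1(Y_o)$. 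The decomposition $K(\Pi,2)\simeq K(\Z[\pi],2)\times K(L,2)$ and the induced splitting of $H^3(\pi;\Pi)$ realize the Postnikov fibration of $P_2(X)$ over $K(\pi,1)$ as a fibered product $P_2(Y_o)\times_{K(\pi,1)}P_2^{(2)}$, where $P_2^{(2)}$ is the Postnikov $2$-stage with $\pi_1=\pi$, $\pi_2=L$ and $k$-invariant $b$; projection onto the first factor gives $\rho$. For the section, apply Lemma~\ref{cc3} with $\mathcal{M}=L$ and $\omega=b$ to produce $h:\Z[\pi]\to L$ with $h_*(k_1(Y_o))=b$; then $(\mathrm{id},h):\Z[\pi]\to\Z[\pi]\oplus L=\Pi$ respects $k$-invariants and induces a section $\iota:P_2(Y_o)\to P_2(X)$ of $\rho$.

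For the final statement, a second such $PD_4$-complex $X'$ (one whose $P_2(X')$ also retracts onto $P_2(Y_o)$, with $\pi$, $w$ and $\Pi$ identified via $f$ and the given iso on $\pi_2$) has $k_1(X')=(a',b')$ with $a'$ a generator. After composing the identification $\pi_2(X')\cong\Pi$ with $\pm\mathrm{id}_{\Z[\pi]}\oplus\mathrm{id}_L$ if needed, we may assume $a'=a=k_1(Y_o)$. Lemma~\ref{cc3} then supplies $g:\Z[\pi]\to L$ with $g_*(k_1(Y_o))=b'-b$, and the shear automorphism $\Phi$ of $\Pi$ given by $\Phi(x,y)=(x,y+g(x))$ carries $k_1(X)$ to $k_1(X')$. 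Hence $(\mathrm{id}_\pi,\Phi)$ realizes the required homotopy equivalence $P_2(X)\simeq P_2(X')$. The main obstacle is justifying the decomposition of the Postnikov fibration as a fibered product corresponding to the direct-sum splitting $\Pi=\Z[\pi]\oplus L$, and normalizing the generator coming from $k_1(X)$ so that it agrees with $k_1(Y_o)$; once this is in place, Lemma~\ref{cc3} does the remaining work of matching $L$-components.
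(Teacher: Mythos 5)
Your proof is correct and follows essentially the same route as the paper's: identify $k_1(Y_o)$ with the class of the length-$3$ resolution obtained by capping the chain complex of $Y_o$ with $\pi_2(Y_o)\cong\mathbb{Z}[\pi]$ and apply Lemma \ref{pdnk}, then use Lemma \ref{cc3} both to construct the section and to shear the $L$-component of the $k$-invariant in the uniqueness step (your explicit fibered-product description of $P_2(X)$ just fleshes out the paper's one-line appeal to morphisms of algebraic $2$-types). The only caveat is that you should not claim $Y_o$ is literally realized as a $2$-complex (that brushes against the D2 problem); the paper only uses that $C_*(Y_o;\mathbb{Z}[\pi])$ is chain homotopy equivalent to a finite projective complex concentrated in degrees $\leq 2$, which is all your resolution argument actually needs.
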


\begin{proof}
If $P_2(X)$ retracts onto $P_2(Y_o)$ then there is a pair of maps
 $j:P_2(Y_o)\to{P_2(X)}$ and $r:P_2(X)\to{P_2(Y_o)}$ such that 
$rj\sim{id_{Y_o}}$.
It follows immediately that $\pi_2(Y_o)\cong\mathbb{Z}[\pi]$ 
is a direct summand of $\Pi$, and that $j^*k_1(X)=k_1(Y_o)$,
up to the action of automorphisms.
The chain complex $C_*(Y_o;\mathbb{Z}[\pi])$ is chain homotopy equivalent to
a finite projective complex $D_*$ with $D_i=0$ for $i>2$.
The complex $C_*$ with $C_i=D_i$ for $i\not=3$ and $C_3=H_2(D_*)\cong\pi_2(Y_o)$
is a projective resolution of $\mathbb{Z}$, 
and $k_1(Y_o)$ is the class of $C_*$ in $H^3(\pi;\mathbb{Z}[\pi])$.
Hence $k_1(Y_o)$ 
is a generator of $H^3(\pi;\mathbb{Z}[\pi])$,
by Lemma \ref{pdnk}.

Conversely, if the conditions hold then there are morphisms between the algebraic 2-types
$[\pi,\Pi,k_1(X)]$ and $[\pi,\pi_2(Y_o),k_1(Y_o)]$ which can be realized by maps defining
a retraction.

Suppose now that $X'$ is another such $PD_4$-complex realizing $\pi$ and $\Pi$.
We may assume that $\Pi\cong\mathbb{Z}[\pi]\oplus{L}$,
for some $\mathbb{Z}[\pi]$-module $L$,  by Theorem \ref{retract}.
Let $e$ generate a free summand of $\Pi$.
If $k$ and $k'$ in $H^3(\pi;\Pi)$ each project to generators of
$H^3(\pi;\mathbb{Z}[\pi])\cong\mathbb{Z}$ then we may use Lemma \ref{cc3} to
find an automorphism $\phi$ of $\Pi$ such that $\phi(e)=\pm{e}+\ell$, 
for some $\ell\in{L}$, and $\phi|_L=id_L$,
and such that the induced automorphism of $H^3(\pi;\Pi)$ carries $k$ to $k'$.
\end{proof}

This result extends partially an observation in \cite{KLPT}, 
namely that the $k$-invariant plays no role in their stable classification.

\begin{cor}
\label{chi2cor}
$P_2(X)\simeq{P_2(\partial(Y_o\times{D^2}))}\Leftrightarrow
\Pi\cong\mathbb{Z}[\pi]\oplus{I_\pi}$.
\qed
\end{cor}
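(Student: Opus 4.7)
The forward direction is immediate: a homotopy equivalence $P_2(X)\simeq P_2(\partial(Y_o\times D^2))$ induces an isomorphism on second homotopy groups, whence $\Pi\cong\pi_2(\partial(Y_o\times D^2))\cong\mathbb{Z}[\pi]\oplus I_\pi$ by the computation in \S2.

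For the converse, the strategy is to invoke the second assertion of Theorem \ref{retract} with $X(\pi)=\partial(Y_o\times D^2)$ as the reference complex. It was noted in \S2 that $X(\pi)$ retracts onto $Y_o$ and that $\pi_2(X(\pi))\cong\mathbb{Z}[\pi]\oplus I_\pi$, so the first part of Theorem \ref{retract} already tells us that the image of $k_1(X(\pi))$ in $H^3(\pi;\mathbb{Z}[\pi])$ is a generator. Under the corollary's hypothesis, our given $X$ and the reference $X(\pi)$ share the same $\pi_1$, the same $w_1$, and isomorphic $\pi_2\cong\mathbb{Z}[\pi]\oplus I_\pi$. The second part of Theorem \ref{retract}, applied with $X(\pi)$ playing the role of ``$X$'' and our $X$ playing the role of ``$X'$'', then delivers $P_2(X)\simeq P_2(X(\pi))=P_2(\partial(Y_o\times D^2))$, as required.

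The step demanding the most care is the invocation of Theorem \ref{retract}'s second statement, whose proof proceeds via the observation that both $k$-invariants project to generators of $H^3(\pi;\mathbb{Z}[\pi])$. For the reference $X(\pi)$ this was supplied by the retraction, but for the arbitrary $X$ the analogous fact for $k_1(X)$ is not evidently automatic. To secure it I would exploit the cellular chain complex of the universal cover: using Poincar\'e duality together with $c.d.\,\pi=3$, choose a finite projective chain model $C_*$ of $\widetilde X$ whose only nontrivial homology is $H_0=\mathbb{Z}$ and $H_2=\Pi$, arranged so that a free $\mathbb{Z}[\pi]$ summand sits at the top. Splitting off the free $\mathbb{Z}[\pi]$ summand of $\Pi$ and splicing appropriately, one should extract from $C_*$ a length-$3$ projective resolution of $\mathbb{Z}$ with top term $\mathbb{Z}[\pi]$ which represents the image of $k_1(X)$ in $H^3(\pi;\mathbb{Z}[\pi])$; Lemma \ref{pdnk} then forces this image to be a generator, and the argument closes.
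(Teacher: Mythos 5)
Your overall route is the one the paper intends: the corollary carries no proof of its own and is meant to follow at once from Theorem \ref{retract}, with the forward implication coming from $\pi_2(\partial(Y_o\times D^2))\cong\mathbb{Z}[\pi]\oplus I_\pi$ and the converse from the second assertion of that theorem applied to $X(\pi)=\partial(Y_o\times D^2)$ and the given $X$. You are also right to isolate the delicate point: as proved, the second assertion of Theorem \ref{retract} only compares two complexes whose $k$-invariants \emph{both} project to generators of $H^3(\pi;\mathbb{Z}[\pi])$. For $X(\pi)$ this is supplied by the retraction onto $Y_o$ together with Lemma \ref{pdnk}, but for an arbitrary $X$ with $\Pi\cong\mathbb{Z}[\pi]\oplus I_\pi$ it is an additional claim, which the paper nowhere establishes (indeed it lists the generator condition as a separate hypothesis both in Theorem \ref{retract} and in \S6). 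So the gap you flag is real, and it sits in the paper's deduction as much as in yours.

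The patch you sketch, however, does not close it. The class $k_1(X)$ is represented by the exact sequence $0\to\Pi\to C_2/\partial_3C_3\to C_1\to C_0\to\mathbb{Z}\to0$ extracted from a chain model $C_*$ of $\widetilde X$, and pushing out along the projection $\Pi\to\mathbb{Z}[\pi]$ produces a sequence whose new middle term is an extension of $\ker\partial_1$ by $\mathbb{Z}[\pi]$; neither this term nor $C_2/\partial_3C_3$ is projective in general, so Lemma \ref{pdnk} --- which applies only to a length-$3$ \emph{projective} resolution of $\mathbb{Z}$ with top term $\mathbb{Z}[\pi]$ --- cannot be invoked on what you obtain. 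Moreover the isomorphism $\Pi\cong\mathbb{Z}[\pi]\oplus I_\pi$ is an abstract module splitting with no given chain-level realization, so ``splitting off the free summand and splicing'' is not a well-defined operation on $C_*$. Some genuine use of Poincar\'e duality for $X$ itself appears to be needed: for example, identifying $[\pi]\cap k_1(X)\in H_0(\pi;\Pi^w)$ with the bottom filtration quotient of $H_2(X;\mathbb{Z}^w)\cong H^2(X;\mathbb{Z})$ in the Cartan--Leray spectral sequence forces the projection of $k_1(X)$ to be a generator when $\beta_1(\pi)=0$, but a rank and torsion count alone does not settle the general case. Absent such an argument, the corollary should be read with the generator condition of Theorem \ref{retract} as an implicit hypothesis.
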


\section{a special case}

If there is a retraction $r:X\to{Y_o}$ then $r^*H^2(Y_o;\mathbb{Z}[\pi])\cong\mathbb{Z}[\pi]$
is a free direct summand of $\Pi$ which 
is self-annihilating with respect to $\lambda_X$, since $r_*[X]=0$,
and the image of $k_1(X)$ generates the corresponding summand of $H^3(\pi;\Pi)$,
by Theorem \ref{retract}.

In this section we shall show that for each choice of $\pi$ there are at most 
two homotopy types of $PD_4$-complexes $X$ with $\pi_1(X)\cong\pi$,
$w_1(X)=w_1(\pi)$ and $\chi(X)=2$,
and which satisfy the conditions:
\begin{enumerate}
\item
$\Pi\cong\mathbb{Z}[\pi]\oplus{L}$, where the first summand
is self-annihilating with respect to $\lambda_X$;
and 
\item{}the image of $k_1(X)$ generates the summand $H^3(\pi;\mathbb{Z}[\pi])$.
\end{enumerate}

The dual $L^\dagger$ is projective, 
and is of stable rank 1 if $\chi(X)=2$.
We show first that $L\cong{I_\pi}$.
Condition (1) implies that 
$ev:\Pi\to\Pi^\dagger\cong\mathbb{Z}[\pi]\oplus{L^\dagger}$ 
(the adjoint of $\lambda_X$)
has block-matrix form
\[
\left(\smallmatrix 0&\tilde\beta\\
\alpha&\tilde\gamma \endsmallmatrix\right),
\]
where $\alpha\in{L^\dagger}$,
$\tilde\beta:L\to\mathbb{Z}[\pi]$
and $\tilde\gamma:L\to{L^\dagger}$.
Considering the exact sequence of Lemma 2, 
we see that $\tilde\beta$ must be injective,
since $ev$ is injective, and $\mathrm{Cok}(\tilde\beta)=\mathbb{Z}$ or 0.
But if $\tilde\beta$ is an isomorphism then $\Pi\cong\mathbb{Z}[\pi]\oplus{J^\dagger}$, 
and so is projective.
This contradicts $[\Pi]_{pr}=[I_\pi]_{pr}$.
Hence $\mathrm{Cok}(\tilde\beta)=\mathbb{Z}$, 
and so $L\cong\mathrm{Im}(\tilde\beta)=I_\pi$.

Since $I_\pi^\dagger\cong\mathbb{Z}[\pi]$ and 
$Hom_{\mathbb{Z}[\pi]}(I_\pi,\mathbb{Z}[\pi])\cong\mathbb{Z}[\pi]j_\pi$,
we may now write $\tilde\beta=\beta.j_\pi$ and $\tilde\gamma=\gamma.j_\pi$,
where $\alpha,\beta,\gamma\in\mathbb{Z}[\pi]$.
We have $\alpha=\overline\beta$ and $\overline{\gamma}=\gamma$,
by the hermitean symmetry of $\lambda_X$.
Moreover $\beta$ is not a zero-divisor in $\mathbb{Z}[\pi]$,
since $ev$ is injective, 
and $\beta.j_\pi$ must map $I_\pi$ onto $I_\pi$, 
by the exactness of the sequence in Lemma 2 above.
Thus $\beta$ is a unit in $End_{\mathbb{Z}[\pi]}(I_\pi)\cong\mathbb{Z}[\pi]$.
(Hence so is $\alpha=\overline\beta$.)
By using the change of basis $(x,y)\mapsto(x,\beta^{-1}y)$, 
for $x\in\mathbb{Z}[\pi]$ and $y\in{I_\pi}$ we may arrange that $\beta=1$
(and $\gamma$ becomes $\widehat\gamma=\alpha^{-1}\gamma\beta^{-1}$).
If we then use the change of basis $(x,y)\mapsto(x+\delta.y,y)$
we may replace $\widehat\gamma$ by $\widehat\gamma+\delta+\overline\delta$.
We see easily that there are at most two isometry classes of such pairings,
depending on the parity of $\varepsilon(\gamma)$.
As the image of $\varepsilon(\gamma)$ in $\mathbb{Z}/2\mathbb{Z}$ 
is invariant under all changes of basis,  
the two remaining possibilities are distinct.
Condition (2) implies that $k_1(X)$ is essentially unique.
Thus there are at most two such homotopy types.

If $Y$ is a 3-manifold this parity detects the twist $\tau$ \cite{Pl86}.
Is this true more generally?
If $Y$ is a homology 3-sphere and $\chi(X)=2$ then $X$ is a homology 4-sphere, 
so $v_2(X)=0$. Thus this parity is not directly related to standard characteristic classes.

\newpage

\end{document}